\newtheorem{theorem}{Theorem}[section]
\newtheorem{corollary}{Corollary}[theorem]
\newtheorem{lemma}[theorem]{Lemma}
\theoremstyle{definition}
\theoremstyle{definition}
\newtheorem{remark}[theorem]{Remark}
\newcommand{\RN}[1]{%
	\textup{\uppercase\expandafter{\romannumeral#1}}%
}
\DeclareMathOperator*{\argmin}{\arg\!\min}
\newcommand{\bb}[1]{\mathbf{#1}}
\newcommand{\mmu}{\bm{\mu}}
\newcommand{\bba}{\bm{\alpha}}
\newcommand{\EE}[1]{\mathbb{E}\left[#1\right]}
\newcommand{\VV}[1]{\mathrm{Var}\left[#1\right]}
\newcommand{\CC}[2]{\mathrm{Cov}\left[#1, #2\right]}
\newcommand{\nn}[1]{\left|#1\right|}
\newcommand{\lr}[1]{\left(#1\right)}
\newcommand{\yb}{\bar{y}}
\newcommand{\yh}{\hat{y}}
\newcommand{\emf}{e\left(\hat{y}\right)}
\newcommand{\remf}{\hat{e}\left(\hat{y}\right)}
\newcommand{\emc}{e\left(\bar{y}^i_m\right)}
\newcommand{\remc}{\hat{e}\left(\bar{y}^i_m\right)}
\newcommand{\eMC}{e\left(\bar{y}^1_m\right)}
\begin{document}

\title{A Multifidelity Monte Carlo Method for Realistic Computational Budgets}

\author{Anthony Gruber$^{*,1}$ \and Max Gunzburger$^{1,2}$ \and Lili Ju$^3$ \and Zhu Wang$^3$}

\address{$^1$ Department of Scientific Computing, Florida State University, 400 Dirac Science Library, Tallahassee, FL 32306}

\address{$^2$ Oden Institute for Computational Engineering and Sciences, University of Texas at Austin, 201 E 24th Street, Austin, TX 78712}

\address{$^3$ Department of Mathematics, University of South Carolina, 1523 Greene Street, Columbia, SC 29208}

\thanks{$^*$Corresponding author: (Anthony Gruber)  agruber@fsu.edu}

\email{agruber@fsu.edu, mgunzburger@fsu.edu, ju@math.sc.edu, wangzhu@math.sc.edu}

\begin{abstract}
A method for the multifidelity Monte Carlo (MFMC) estimation of statistical quantities is proposed which is applicable to computational budgets of any size.  Based on a sequence of optimization problems each with a globally minimizing closed-form solution, this method extends the usability of a well known MFMC algorithm, recovering it when the computational budget is large enough. Theoretical results verify that the proposed approach is at least as optimal as its namesake and retains the benefits of multifidelity estimation with minimal assumptions on the budget or amount of available data, providing a notable reduction in variance over simple Monte Carlo estimation.
\keywords{Multifidelity methods \and Monte Carlo estimation \and uncertainty quantification}
\end{abstract}

\maketitle

\section{Introduction}
Multifidelity Monte Carlo (MFMC) methods have shown great promise for the efficient and flexible estimation of statistical quantities.  As experimental data can take a variety of forms, the primary advantage of MFMC estimation is its ability to accommodate a diverse ensemble of information sources which may be unrelated apart from predicting the same quantity of interest.  However, despite their success in small proof-of-concept experiments where unlimited data are available, MFMC algorithms have not yet become standard in larger experiments of practical interest which are dominated by traditional Monte Carlo (MC) or Multilevel Monte Carlo (MLMC) techniques (see e.g. \cite{cubasch1994,gjerstad2003,leathers2004,hong2006,tomassini2007,mishra2012} and references therein).  While this is partly inertial, it is also true that MFMC algorithms are not always directly applicable to large scientific simulations, requiring nontrivial modification in order to function effectively.  A notable example of this is the popular MFMC estimation algorithm from \cite{peherstorfer2016} (hereafter known as MFMC), which divides computational resources through a global optimization with a unique closed-form solution. MFMC has been applied to great effectiveness on small-scale problems with functionally unlimited resources (e.g. \cite{patsialis2021,law2021,khodabakhshi2021}), but breaks down when the model costs are large relative to the available budget.  This is because the real-valued  optimization may call for some models to be evaluated less than once, producing an infeasible solution if integer parts are taken.  

Despite this technical concern, it is highly desirable to have an estimation method such as MFMC which is applicable to problems of any size and can guarantee good accuracy without access to inter-model pointwise error metrics.  To that end, this work provides a simple extension of the MFMC algorithm \cite[Algorithm 2]{peherstorfer2016} to any computational budget (c.f. Algorithm~\ref{alg:mod}), which reduces to the original procedure when the budget is large enough.  It is proved that this extension preserves the optimality of MFMC in a precise sense (c.f. Theorem~\ref{thm:optim}), and numerical examples are presented which illustrate this fact.  Applying the modified MFMC presented here to estimate the expectations of quantities arising from benchmark problems provides empirical justification for the proposed method, demonstrating that MFMC has benefits over traditional MC estimation even when the amount of available resources is quite limited.


Before discussing MFMC and its extension in more detail, it is appropriate to mention other concurrent work on multifidelity methods for optimization and uncertainty quantification (see \cite{peherstorfer2018survey} for an introductory survey).  About 20 years ago, a multifidelity method for function approximation involving neural-based systems was formulated as a kriging problem in \cite{leary2003}, while a multifidelity optimization method based on co-kriging was developed in \cite{forrester2007} and applied to Gaussian processes.  Later \cite{narayan2014} developed a multifidelity approach for stochastic collocation, and \cite{ng2014} proposed a multifidelity method for optimization under uncertainty which grew into the MFMC method of present discussion.  Note that the optimization approach of \cite{ng2014,peherstorfer2016} was also explored by other authors including \cite{pauli2015}, who proposed a branch-and-bound algorithm for optimal parameter selection in MLMC with applications to fault tolerance in large-scale systems.  Besides direct estimation, multifidelity methods have also found applications in preconditioning \cite{peherstorfer2019transport} as well as plasma micro-instability analysis \cite{konrad2022}.  At the time of writing, many multifidelity methods also have guarantees on their accuracy, including the MFMC method of present discussion which is analyzed in \cite{peherstorfer2018} and proven (under some assumptions) to be efficient in the sense that it requires a cost bounded by $1/\epsilon$ to achieve a mean squared error (MSE) of $\epsilon$.  It will be shown in Section~\ref{sec:mfmcnew} that the proposed modifications which enable realistic computational budgets also preserve this optimality (see Corollary~\ref{cor:optim}).  In preparation for this, the next Section reviews the MFMC algorithm of \cite{peherstorfer2016} in greater detail.

\section{Overview of MFMC}\label{sec:mfmcalg}
Let $\Omega$ be a sample space, $\mathcal{D}\subset\mathbb{R}^n$ for some $n\in\mathbb{N}$, and $f^1,f^2,...,f^k:\mathcal{D}\to\mathbb{R}$ be a sequence of computational models depending on the random variable $Z: \Omega\to \mathcal{D}$ with decreasing computational complexity.  The goal of MFMC is to estimate the expectation $y = \EE{f^1}$ using a linear combination of standard MC estimators.  Recall that Monte Carlo sampling yields the unbiased estimators 
\[ \bar{y}^i_{n} = \frac{1}{n}\sum_{j=1}^{n} f^i(\bb{z}_j), \]
where $\bb{z}_1,...,\mathbf{z}_{n}$ are i.i.d. realizations of $Z$ and unbiased implies that $\EE{\bar{y}^i_n} = \EE{f^i}$.  These can be used to define the MFMC estimator
\begin{equation}\label{eq:MFMC}
    \hat{y} = \bar{y}^1_{m_1} + \sum_{i=2}^k \alpha_i\left( \bar{y}^i_{m_i} - \bar{y}^i_{m_{i-1}}\right),
\end{equation}
which is unbiased by linearity and parameterized by weights $\alpha_i \in \mathbb{R}$ and an increasing sequence of integers $\{m_i\}$ defining the sampling.  If $w_i$ represents the cost of evaluating the $i^{th}$ model $f^i$, then the costs of computing the respective MC and MFMC estimators are given as
\begin{align*}
    c\left(\bar{y}^i_{n}\right) = nw_i, \qquad c(\hat{y}) = \sum w_im_i = \bb{w}\cdot\bb{m},
\end{align*}  
where $\bb{w}=\{w_i\}_{i=1}^k$ and $\bb{m}=\{m_i\}_{i=1}^k$.  For a fixed computational budget $p = \bb{w}\cdot\bb{m}\in\mathbb{R}$, MFMC aims to construct an optimal sampling strategy $\bb{m}$ with weights $\bm{\alpha} = \{\alpha_i\}_{i=2}^k$ so that the mean squared error (MSE) of the estimator $\hat{y}$ is lower than that of the Monte Carlo estimator $\bar{y}^1_n$ for all $n\leq \lfloor p/w_1\rfloor$.  More precisely, denote the variance of the $i^{th}$ model and the correlation between models $i,j$ by 
\[\sigma_i^2 = \VV{f^i(Z)}, \qquad \rho_{i,j} = \frac{\CC{f^i(Z)}{f^j(Z)}}{\sigma_i\sigma_j}.\] 
Then, the MSE of the MC estimator $\yb^i_n$ with respect to $y$ is simply
\[ e\lr{\bar{y}_n^i} =  \EE{\left(\EE{f^i(Z)} - \bar{y}^i_n\right)^2} = \EE{\left(\EE{\bar{y}_n^i} - \bar{y}^i_n\right)^2} = \frac{\sigma_i^2}{n}, \]
while the MSE of $\yh$ can be expressed (see \cite{peherstorfer2016}) as
\begin{equation}\label{eq:MSE}
    \emf = \VV{\hat{y}} = \frac{\sigma_1^2}{m_1} + \sum_{i=2}^k \left( \frac{1}{m_{i-1}} - \frac{1}{m_{i}} \right)\left( \alpha_i^2\sigma_i^2 - 2\alpha_i\sigma_i\sigma_1\rho_{1,i} \right).
\end{equation}
With this, the optimal weights $\bba^*=\{\alpha_i^*\}_{i=2}^k$ and sampling numbers $\bb{m}^*=\{m_i^*\}_{i=1}^k$ of the MFMC sampling strategy are defined through a solution to the mixed integer minimization problem (MIP),
\begin{equation}\label{eq:MFMCmin}
\begin{aligned}
\argmin_{\bb{m}\in\mathbb{Z}^k,\bba\in\mathbb{R}^{k-1}} \,\,& \emf \\
\textrm{s.t.} \quad
    &m_1 \geq 1, \\
    &m_{i-1} \leq m_i, \qquad 2\leq i \leq k, \\
    &\bb{w}\cdot\bb{m} \leq p.
\end{aligned}
\end{equation}

\begin{remark}
It is worth mentioning that the constraints in \eqref{eq:MFMCmin} are not the only options which lead to a potentially interesting MFMC strategy.  Since the MSE is generically a combination of variance and bias, the constraint $m_1\geq 1$ could be removed provided the expression $\emf$ is modified to include a bias term.  While this approach will not be investigated here, it remains an interesting avenue for future work.
\end{remark}

While analytic solutions to the problem  \eqref{eq:MFMCmin} are not easy to find (if they even exist), it turns out that the continuous relaxation obtained by letting $m_i\in\mathbb{R}^+$ for all $1\leq i \leq k$ has a closed-form solution (c.f. \cite[Theorem 3.4]{peherstorfer2016}).  In particular, suppose the computational models are decreasingly ordered with respect to their correlations with the high-fidelity model $1 = |\rho_{1,1}| > |\rho_{1,2}| > ... > |\rho_{1,k}|$ and have costs satisfying the inequality conditions 
\[ \frac{w_{i-1}}{w_i} > \frac{\rho_{1,i-1}^2 - \rho_{1,i}^2}{\rho_{1,i}^2 - \rho_{1,i+1}^2}, \qquad 2\leq i \leq k, \]
where $\rho_{1,k+1} := 0$; these conditions are always true for at least the subset $\{f^1\}$, in which case MFMC reduces to MC.  Introducing the notation
\[ r_i^* = \sqrt{ \frac{w_1\left( \rho_{1,i}^2 - \rho_{1,i+1}^2 \right)}{w_i\left( 1-\rho_{1,2}^2 \right)} }, \]
it follows from \cite[Theorem 3.4]{peherstorfer2016} that the optimal solution to the relaxation of \eqref{eq:MFMCmin} with $\bb{m}\in\mathbb{R}^{k}_+$ is given as
\begin{equation*}
    m_1^* = \frac{p}{\bb{w}\cdot\bb{r}}, \qquad
    \alpha_i^* = \frac{\rho_{1,i}\sigma_1}{\sigma_i}, \qquad m_i^* = m_1^* r_i^*, \qquad 2\leq i \leq k.
\end{equation*}
Temporarily neglecting the integrality of $\bb{m}^*$ necessary for evaluation, it follows that the MSE of the estimator $\hat{y}$ at this minimum becomes
\begin{equation}\label{eq:emfmc}
    e\left(\hat{y}\right) = \frac{\sigma_1^2\left(1-\rho_{1,2}^2 \right)p}{\left(m_1^*\right)^2 w_1},
\end{equation}
which is lower than the Monte Carlo equivalent with $c\left(\bar{y}^1_n\right) = p$,
\begin{equation}\label{eq:emc}
    e\left(\bar{y}^1_n\right)= \frac{\sigma_1^2}{n} = \frac{\sigma^2_1 w_1}{p},
\end{equation}
if and only if the ratio inequality
\begin{equation}\label{eq:compineq}
    \sqrt{\frac{\emf}{\eMC}}=\sum_{i=1}^k \sqrt{ \frac{w_i}{w_1}\left(\rho_{1,i}^2 - \rho_{1,i+1}^2\right) } < 1,
\end{equation}
is satisfied (c.f. \cite[Corollary 3.5]{peherstorfer2016}). This provides a way to explicitly compute the potential reduction in error provided by MFMC as a function of the weights and model correlations, and displays a primary advantage of this method: the mean squared error $\emf$ can be rigorously estimated without assumptions on the pointwise errors $\nn{f^1(\bb{z}) - f^i(\bb{z})}$ for $\bb{z}\in\mathcal{D}$ which may be unattainable in practice.

On the other hand, the optimal sampling numbers $\bb{m}^*$ computed according to the above procedure are not guaranteed to be integers.  The issues with this have seemingly not arisen before now, as the standard application of MFMC formulated in \cite[Algorithm 2]{peherstorfer2016} and widely used thereafter simply chooses the integer parts $\bb{m}^*\gets \lfloor\bb{m}^*\rfloor$ of each optimal sampling number.  For large budgets, this is of little consequence\textemdash each model is evaluated hundreds of times or more, so the integer parts $\lfloor m^*_i\rfloor$ provide a simple and reasonable solution which is guaranteed to be feasible (if potentially sub-optimal) for the original problem \eqref{eq:MFMCmin}.  However, this approach is unsuitable for the smaller budgets found in large-scale problems and simulated in Section~\ref{sec:numerics}, where $p \ll \bb{w}\cdot\bb{r}$ and some $m_i^* \ll 1$.  Taking the integer part $\lfloor m_1^*\rfloor$ in this case not only violates the constraints of the problem but also biases the estimator $\hat{y}$, destroying the applicability of MFMC.  The remainder of this work discusses a relatively slight modification which addresses this concern and is guaranteed to preserve optimality in the sampling strategy.

\section{A Modified MFMC Algorithm}\label{sec:mfmcnew}

Before discussing the proposed modification to MFMC which will apply to both small and large budgets, it is prudent to recall the two stages of the original MFMC algorithm developed in \cite{peherstorfer2016}.  Given the models $\{f^i\}_{i=1}^k$, the first stage outlined in Algorithm~\ref{alg:select} performs an exhaustive search over $2^{k-1}$ subsets $\mathcal{M}\subset\{f^i\}$ containing the high-fidelity model $f^1$, choosing the models which yield the lowest predicted MSE $\emf$ relative to the MC estimator.  Note that this choice is independent of the computational budget $p$ by \eqref{eq:compineq}, so it suffices to choose e.g. $p=10w_1$ as a benchmark value during model selection.  The second stage uses the result  $\mathcal{M}^*\subset \mathcal{M}$ of Algorithm~\ref{alg:select} to compute the optimal MFMC estimator $\hat{y}$.  More precisely, optimal weights $\bm{\alpha}^*$ and model evaluation numbers $\bb{m}^*$ which parameterize the best possible MFMC estimator $\hat{y}$ are first computed, so that samples of the random variable $Z$ can be drawn based on $\bb{m}^* \gets \lfloor\bb{m}^*\rfloor$ and used to compute $\hat{y}$ according to \eqref{eq:MFMC}.  However, this procedure breaks down when $p<\bb{w}\cdot\bb{r}$ as discussed before, making it necessary to find an alternative which is applicable when the budget $p$ is too small to produce an unbiased sampling strategy.  The most obvious ``solution'' to this problem is to na\"{i}vely compute $m_i^*\gets \lceil m_i^*\rceil$ whenever $m_i^*<1$, so that any sampling numbers less than one are rounded up. While this simple change will produce a meaningful estimator $\hat{y}$, it is not an acceptable modification in general, since it is likely that $\bb{w}\cdot\bb{m}^*>p$ exceeds the allowed computational budget (see e.g. the examples in  Section~\ref{sec:numerics}).

\begin{algorithm}[!ht]
\caption{MFMC Model Selection (c.f. \cite[Algorithm 1]{peherstorfer2016})}\label{alg:select}
\begin{algorithmic}[1]
    \Ensure $\rho_{1,i}^2 > \rho_{1,i+1}^2$ for $1\leq i \leq k$ (reorder if necessary).
    \State Set computational budget $p=w_1$.
    \State Initialize $\mathcal{M}^* = \{f^1\}$.
    \State Initialize $v^* = \sigma_1^2w_1/p$.
    \For{each $\mathcal{N}\subseteq \{f^2,...,f^k\}$ }
        \State Set $\mathcal{M} = \{f^1\} \cup \mathcal{N}$. 
        \State Set $k' = \nn{\mathcal{M}}$.
        \State Let $i_1,...,i_{k'}$ be the model indices of $\mathcal{M}$ such that $\rho_{1,i_j}^2 > \rho_{1,i_{j+1}}^2$ for $1\leq j \leq k'-1$.
        \State Set $\rho_{1,i_{k'+1}}=0$.
        \If{The following inequality holds:
        \[\frac{w_{i-1}}{w_i}\leq \frac{\rho_{1,i-1}^2-\rho_{1,i}^2}{\rho_{1,i}^2-\rho_{1,i+1}^2}, \quad\mathrm{for\,some\,\,} 2\leq i \leq k', \]\quad}
            \State Continue.
        \EndIf
        \State Compute the predicted variance 
        \[v = \frac{\sigma_1^2}{p}\lr{\sum_{j=1}^{k'}\sqrt{\rho_{1,i_j}^2-\rho_{1,i_{j+1}}^2}}^2.\]
        \If{$v<v^*$}
            \State Set $\mathcal{M}^*=\mathcal{M}$.
            \State Set $v^*=v$.
        \EndIf
        \EndFor
    \State \Return $\mathcal{M}^*,v^*$.
\end{algorithmic}
\end{algorithm}


While simple rounding is not enough to repair MFMC estimation when $p<\bb{w}\cdot\bb{r}$, it will now be shown that an appropriately modified MFMC algorithm always yields a solution which is constraint-preserving, feasible for \eqref{eq:MFMCmin}, and optimal up to the rounding $m_i^*\gets \lfloor m_i^*\rfloor$ of some components $m_i^*$.  First, note that it can be shown by repeating the arguments in \cite{peherstorfer2016} that the Lagrangian,
\begin{equation}\label{eq:ifunc}
    L^i\lr{\bb{m},\bba,\bm{\mu},\lambda,\xi} = \emf + \lambda \left(\sum_{j=i}^k w_jm_j - \left(p - \sum_{j=1}^{i-1} w_j\right)\right) + \sum_{j=i+1}^k\mu_j\left(m_j-m_{j-1}\right) - \xi m_i,
\end{equation}
has a unique global minimizer corresponding to a pair $(\bb{m}^*,\bba^*)\in\mathbb{R}^{k-i+1}\times\mathbb{R}^{k-i}$ for any $1\leq i \leq k$ (overloading the notation), where $\lambda\in\mathbb{R}, \mu_{i+1},...,\mu_k,\xi\in\mathbb{R}_+$ are dual variables encoding the constraints of \eqref{eq:MFMCmin}.  Observe that the minimization of $L^i$ for $i>1$ is almost the partial minimization of $L=L^1$ with the restriction $m_1=...=m_{i-1}=1$ on the model evaluation numbers, although they are not precisely equivalent.  Particularly, the $\lambda$-term enforces the budget constraint $\bb{w}\cdot\bb{m}=p$, and the $\mu_j$-terms enforce the increasing nature of the $m_j$ for $j\geq i$.  On the other hand, the constraint $m_i\geq m_{i-1}=1$ is not explicitly enforced, although the weaker constraint $m_i>0$ is still assured due to the $\xi$-term and the global minimizer of $L^i$ can be computed analytically for all $1\leq i \leq k$ using the following result.

\begin{algorithm}[!ht]
\caption{Modified MFMC Estimation}\label{alg:mod}
\begin{algorithmic}[1]
    \Ensure The models $f^1,...,f^k$ are the result of Algorithm~\ref{alg:select} and the budget satisfies $p \geq \sum_i w_i$.
    \State Set $\bm{\rho}_1 \gets \left[\bm{\rho}_1\,\, 0\right] \in \mathbb{R}^{k+1}$.
    \State Set the weights
    \[ \alpha_i = \frac{\rho_{1,i}\sigma_1}{\sigma_i},\qquad 2\leq i\leq k. \]
    \While{there is $1\leq i \leq k-1$ such that $m_i < 1$} 
        \State Set $i$ equal to the first such index.
        \State $m_i \gets 1$ and $r_{i+1} \gets 1$.
        \State Set the coefficients $r_j$, 
        \[r_j \gets \sqrt{\frac{w_{i+1}}{w_j}\left( \frac{\rho_{1,j}^2-\rho_{1,j+1}^2}{\rho_{1,i+1}^2 - \rho_{1,i+2}^2}\right)}, \qquad i+1<j\leq k.\]
        \State Set the $(i+1)$-st component of $\bb{m}$,
        \[m_{i+1} = \frac{p-\sum_{j=1}^i w_j}{\sum_{j=i+1}^k w_jr_j}.\]
        \State Set the remainder of $\bb{m}$,
        \[m_j = m_{i+1}r_j, \qquad i+1<j\leq k.\]
    \EndWhile
    \State Draw $m_k$ i.i.d. realizations $\bb{z}_1,...,\bb{z}_{m_k}$ of the random variable $Z$.
    \State Compute the MC estimators 
        \[ \bar{y}_{m_i}^i = \frac{1}{m_i}\sum_{j=1}^{m_i}f^i\lr{\bb{z}_j}, \qquad 1\leq i \leq k. \]
    \State \Return the MFMC estimator
        \[\hat{y} = \bar{y}^1_{m_1} + \sum_{i=2}^k \alpha_i\left( \bar{y}^i_{m_i} - \bar{y}^i_{m_{i-1}}\right).\]
\end{algorithmic}
\end{algorithm}

\begin{remark}
Note that the budget restriction $p\geq \sum_i w_i$ necessary for Algorithm~\ref{alg:mod} is the minimum necessary for the MFMC sampling procedure to make sense.  Indeed, if $p<\sum_i w_i$, then it is not possible to both evaluate $m_1$ and ensure that $\{m_i^*\}_{i=1}^k$ forms an increasing sequence.  Therefore, the claim that Algorithm~\ref{alg:mod} applies to any computational budget should be read with the understanding that these degenerate cases are omitted.
\end{remark}

\begin{lemma}\label{thm:mod}
Let $\{f^i\}_{i=1}^k$ be computational models with correlation coefficients satisfying $\nn{\rho_{1,i-1}}>\nn{\rho_{1,i}}$ for all $2\leq i \leq k$ and computational costs $\{w_1,...,w_k\}$ satisfying
\[ \frac{w_{i-1}}{w_i} > \frac{\rho_{1,i-1}^2-\rho_{1,i}^2}{\rho_{1,i}^2-\rho_{1,i+1}^2}, \qquad 2\leq i \leq k. \]
Defining the ratios 
\[ r^*_j = \frac{m_j^*}{m_i^*} = \sqrt{\frac{w_i}{w_j}\left( \frac{\rho_{1,j}^2-\rho_{1,j+1}^2}{\rho_{1,i}^2 - \rho_{1,i+1}^2}\right)},\qquad i\leq j \leq k, \]
for each $1\leq i \leq k$ the unique global minimum $\lr{\bb{m}^*,\bm{\alpha}^*}\in\mathbb{R}^{k-i+1}\times\mathbb{R}^{k-i}$ of the Lagrangian $L^i$ from \eqref{eq:ifunc} satisfies 
\begin{equation*}
    m_i^* = \frac{p-\sum_{j=1}^{i-1}w_j}{\sum_{j=i}^k w_jr_j}, \qquad m_j^* = m_i^* r^*_j, \qquad \alpha_j^* = \frac{\rho_{1,j}\sigma_1}{\sigma_j}, \qquad i<j\leq k.
\end{equation*}
\end{lemma}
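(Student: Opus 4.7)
The strategy is to carry out the KKT analysis of \cite[Theorem 3.4]{peherstorfer2016} with the index shift $1\mapsto i$, which is justified because $L^i$ is structurally the same Lagrangian applied to the last $k-i+1$ models with the reduced budget $p-\sum_{j=1}^{i-1}w_j$ in place of $p$. I begin by differentiating \eqref{eq:MSE} in each free weight $\alpha_j$; the stationarity condition reads $(1/m_{j-1}-1/m_j)(\alpha_j\sigma_j^2-\sigma_j\sigma_1\rho_{1,j})=0$, which forces $\alpha_j^*=\rho_{1,j}\sigma_1/\sigma_j$ whenever the $m$-dependent prefactor is nonzero, and this is the case for all $j>i$ under the targeted solution. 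Substituting these weights back into \eqref{eq:MSE} and telescoping using $\rho_{1,1}=1$ and $\rho_{1,k+1}=0$ collapses the MSE to the strictly convex, separable form
\[ \emf = \sigma_1^2 \sum_{j=1}^k \frac{\rho_{1,j}^2 - \rho_{1,j+1}^2}{m_j}. \]
With $m_1=\cdots=m_{i-1}=1$ frozen, the reduced objective on $(m_i,\ldots,m_k)\in\mathbb{R}_+^{k-i+1}$ remains strictly convex, so any interior KKT point is automatically the unique global minimum.

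Next I impose $\partial L^i/\partial m_j = 0$ on the working assumption that $\xi=0$ and $\mu_j=0$ for all $j$. These stationarity equations reduce uniformly to $\sigma_1^2(\rho_{1,j}^2-\rho_{1,j+1}^2)/(m_j^*)^2 = \lambda w_j$ for $i\leq j\leq k$, so dividing the $j$-th equation by the $i$-th eliminates $\lambda$ and gives $m_j^*/m_i^* = r_j^*$ as claimed. The value of $m_i^*$ is then pinned down by the budget equality $\sum_{j=i}^k w_j m_j^* = p-\sum_{j=1}^{i-1} w_j$. It remains to verify that the multipliers were correctly assumed inactive: the cost hypothesis $w_{j-1}/w_j>(\rho_{1,j-1}^2-\rho_{1,j}^2)/(\rho_{1,j}^2-\rho_{1,j+1}^2)$ implies $r_{j-1}^*<r_j^*$, so $m_{j-1}^*<m_j^*$ strictly for $j>i$ and $m_i^*>0$; complementary slackness then forces $\mu_j=\xi=0$ at the candidate point, and uniqueness follows from the convexity established above.

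The main obstacle here is bookkeeping rather than any new conceptual difficulty, since the proof is essentially a relabeled version of the $i=1$ case in \cite[Theorem 3.4]{peherstorfer2016}. The one structural subtlety worth highlighting is that \eqref{eq:ifunc} deliberately replaces the ``natural'' constraint $m_i\geq m_{i-1}=1$ with the weaker positivity constraint $m_i\geq 0$; this is precisely what allows the minimizer of $L^i$ to return a value $m_i^*<1$ when the residual budget is too small, which is the signal that triggers the outer loop of Algorithm~\ref{alg:mod} to move on to $L^{i+1}$.
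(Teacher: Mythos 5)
Your proposal is correct and follows essentially the same route as the paper's proof: a KKT analysis of the index-shifted Lagrangian $L^i$, obtaining $\alpha_j^*=\rho_{1,j}\sigma_1/\sigma_j$ from stationarity in the weights, taking $\mu_j=\xi=0$ as justified by the strict increase of $\bb{m}^*$ under the cost hypothesis, extracting the ratios $r_j^*$ from the $m$-stationarity equations, and fixing $m_i^*$ via the budget equality. The only real difference is that where the paper defers to the arguments of \cite{peherstorfer2016} for uniqueness of the global minimizer, you substitute $\bm{\alpha}^*$ and telescope the MSE into the separable convex form $\sigma_1^2\sum_{j}\bigl(\rho_{1,j}^2-\rho_{1,j+1}^2\bigr)/m_j$ to certify it directly, which makes the write-up slightly more self-contained but does not change the argument.
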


\begin{proof}
Let $1\leq i \leq k$ and $2\leq j \leq k$.  A straightforward calculation using the representation \eqref{eq:MSE} gives the partial derivatives of the MSE,
\begin{align*}
    \emf_{m_i} &= \frac{1}{m_i^2}\left(\alpha_i^2\sigma_i^2-  2\alpha_i\rho_{1,i}\sigma_1\sigma_i\right) - \frac{1}{m_i^2}\left(\alpha_{i+1}^2\sigma_{i+1}^2-  2\alpha_{i+1}\rho_{1,i+1}\sigma_1\sigma_{i+1}\right), \\
    \emf_{\alpha_j} &= 2\left( \frac{1}{m_{j-1}}-\frac{1}{m_j}\right)\left(\alpha_j\sigma_j^2-\rho_{i,j}\sigma_1\sigma_j\right),
\end{align*}
so that for any $i$ the first-order conditions for the optimality of \eqref{eq:ifunc} are
\begin{equation}\label{eq:optim}
\begin{alignedat}{2}
    &L^i_{m_j} = \emf_{m_j}+\lambda w_j+\mu_{j+1}-\mu_j - \xi\delta_{ij} = 0, \qquad && i\leq j\leq k,\\
    &L^i_{\alpha_l} = \emf_{\alpha_l} = 0, \qquad && i<l\leq k, \\
    &\sum_{j=i}^kw_jm_j = p - \sum_{j=1}^{i-1}w_j, \qquad && \\
    &-\xi m_i = 0, \qquad && \\
    &\mu_l\left(m_{l-1}-m_l\right) \leq 0, \qquad && i<l\leq k, \\
    &m_{l-1}-m_l \leq 0, \qquad && i<l\leq k, \\
    &\xi,\mu_{i+1},...,\mu_k \geq 0,
\end{alignedat}
\end{equation}
where $\delta_{ij}$ denotes the Kronecker delta function.  The reader can check by repeating the arguments in \cite{peherstorfer2016} that $L^i$ has a unique global minimum $\lr{\bb{m}^*,\bm{\alpha}^*,\mmu^*,\lambda^*,\xi^*}$ satisfying $0<m^*_{j-1}<m^*_j$ for all $j>i$.  To compute the minimizing vectors $\bb{m}^*,\bm{\alpha}^*$, it suffices to manipulate the system \eqref{eq:optim}. It follows immediately from the equations $L^i_{\alpha_l} = 0$ that $\alpha^*_l = \rho_{i,l}\sigma_1/\sigma_l$.  Moreover, $\xi^*, \mmu^* = 0$ at the minimum since the entries of $\bb{m}^*$ form a strictly increasing sequence, so using $\bba^*$ in the equation $L^i_{m_i} = 0$ gives 
\[\lambda^* = \frac{\sigma_1^2\left(\rho_{1,i}^2-\rho_{1,i+1}^2\right)}{\lr{m_i^*}^2w_i},\]
which can be used in the remaining equations $L^i_{m_l}=0$ to derive the ratios,
\[ r_l^* = \frac{m_l^*}{m_i^*} = \sqrt{\frac{w_i}{w_l}\left( \frac{\rho_{1,l}^2-\rho_{1,l+1}^2}{\rho_{1,i}^2 - \rho_{1,i+1}^2}\right)}, \qquad i<l\leq k. \]
Extending this notation to include $r_i^*=1$ and using it alongside the fact 
\[\sum_{j=i}^k w_jr_j^* = \frac{1}{m_i^*}\sum_{j=i}^k w_jm_j^*,\]
then yields the expression
\[ m_i^* = \frac{p-\sum_{j=1}^{i-1}w_j}{\sum_{j=i}^k w_jr^*_j}, \]
from which the remaining components of $\bb{m}^*$ are computed through $m^*_l = m^*_ir^*_l$.
\end{proof}


This suggests the provably budget-preserving MFMC algorithm which is outlined in Algorithm~\ref{alg:mod}.  At the first step, the optimal model set $\{f^1,...,f^k\}$ is selected as in \cite[Algorithm 1]{peherstorfer2016} and the continuous relaxation of \eqref{eq:MFMCmin} is solved using Lemma~\ref{thm:mod} with $i=1$.  This generates a solution pair $\lr{\bb{m}^*,\bba^*}\in\mathbb{R}^{k}\times\mathbb{R}^{k-1}$ which may or may not contain components $m_i^*<1$.  If no component of $\bb{m}^*$ is less than one, then no modification is needed and simple rounding produces the model evaluation numbers $\bb{m}^*\gets\lfloor\bb{m}^*\rfloor$ which are nearly optimal and preserve the computational budget.  Otherwise, at least the first component $m_1^*<1$ due to the increasing order of $\bb{m}^*$, so the first component is redefined to $m_1^*=1$ and the function $L^2$ is minimized for the remaining $k-1$ components of $\bb{m}^*$.  This gives a new solution vector $\lr{m_2^*,...,m^*_k}$ which may or may not contain entries less than 1.  If it does not, the model evaluation vector becomes $\bb{m}^* = \left(1,\lfloor m^*_2\rfloor,...,\lfloor m^*_k\rfloor\right)$.  Otherwise, this process is repeated and the functions $L^i$ are minimized until there are no more indices $i$ satisfying $m^*_i<1$.  The result is a pair $\lr{\bb{m}^*,\bba^*}\in\mathbb{R}^{k}\times\mathbb{R}^{k-1}$ which represents a small perturbation from continuous optimality at each step and is nearly optimal for the original problem \eqref{eq:MFMCmin}.  In fact, there is the following guarantee.

\begin{theorem}\label{thm:optim}
Suppose $\bb{m}^*$ is the solution to the relaxation of \eqref{eq:MFMCmin} with $m^*_1<1$.  Then, $m^*_1 \gets 1$ is optimal for \eqref{eq:MFMCmin}.  Similarly, $m^*_i\gets 1$ is optimal for \eqref{eq:MFMCmin} whenever $m^*_1 = ... = m^*_{i-1}=1$ are fixed and $m^*_i<1$ is defined by the global minimizer of $L^i$.
\end{theorem}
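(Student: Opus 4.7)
The plan is to reduce everything to a convexity argument. I would first simplify $\emf$ by substituting the optimal weights $\alpha_j^{*}=\rho_{1,j}\sigma_1/\sigma_j$ (available from the first-order conditions $L^i_{\alpha_l}=0$ obtained in Lemma~\ref{thm:mod}) into \eqref{eq:MSE} and telescoping the resulting sum; the expected outcome is a closed form like $\emf=\sigma_1^2\sum_{i=1}^k(\rho_{1,i}^2-\rho_{1,i+1}^2)/m_i$, with the conventions $\rho_{1,1}^2=1$ and $\rho_{1,k+1}=0$. This is manifestly convex in $\bb{m}$ on the positive orthant since each coefficient is non-negative by the hypothesized correlation ordering and each summand $1/m_i$ is convex. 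The feasible set of \eqref{eq:MFMCmin} (before integrality) is also convex, being cut out by linear inequalities.

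With convexity in hand, I would invoke a standard segment/crossing argument. Let $\bb{m}^{*}$ denote the unique global minimizer supplied by Lemma~\ref{thm:mod} at $i=1$; it is feasible for the relaxation obtained by replacing $m_1\geq 1$ with $m_1>0$ and, by hypothesis, satisfies $m_1^{*}<1$. For an arbitrary $\bb{m}$ feasible for \eqref{eq:MFMCmin} (hence with $m_1\geq 1$), I would consider the segment $\bb{m}(t)=(1-t)\bb{m}+t\bb{m}^{*}$, which remains in the larger relaxed feasible set by convexity. Its first coordinate interpolates continuously from $m_1\geq 1$ down to $m_1^{*}<1$, so the segment crosses the hyperplane $\{m_1=1\}$ at some $t^{*}\in(0,1]$. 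Convexity of $\emf$ together with the global minimality of $\bb{m}^{*}$ makes $t\mapsto\emf(\bb{m}(t))$ convex and minimized at $t=1$, hence non-increasing on $[0,1]$, so the crossing point $\tilde{\bb{m}}=\bb{m}(t^{*})$ satisfies $\emf(\tilde{\bb{m}})\leq\emf(\bb{m})$ while obeying $\tilde{m}_1=1$. Infimizing over $\bb{m}$ then shows the minimum over the continuous relaxation of \eqref{eq:MFMCmin} (with $m_1\geq 1$ kept) is attained at $m_1=1$, and since $1\in\mathbb{Z}$ this is also the optimal integer value of $m_1$.

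For the iterative case, I would reapply the same argument verbatim to the subproblem obtained by freezing $m_1^{*}=\ldots=m_{i-1}^{*}=1$: the reduced problem is to minimize the same convex objective over $(m_i,\ldots,m_k)$ subject to the residual budget $p-\sum_{j<i}w_j$ and monotonicity $1\leq m_i\leq\ldots\leq m_k$. Lemma~\ref{thm:mod} applied at index $i$ supplies the unique minimizer of the corresponding relaxation through $L^i$, and the same segment construction between an arbitrary feasible $(m_i,\ldots,m_k)$ and this minimizer again forces $m_i=1$ as the optimal feasible value.

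The main obstacle I expect is the initial simplification of $\emf$, which requires a careful telescoping of \eqref{eq:MSE} to expose convexity; once the convex closed form is in hand, the rest is a routine convexity-and-geometry argument. A secondary subtlety I would address explicitly is the bridge from the continuous conclusion to the integer program \eqref{eq:MFMCmin}, but this is benign because the chosen value $m_1=1$ is itself an integer and the value function in $m_1$ obtained by minimizing over the remaining continuous variables is convex, hence non-decreasing on $[1,\infty)$.
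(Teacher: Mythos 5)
Your proposal is correct, but it reaches the conclusion by a somewhat different route than the paper. The shared ingredient is the telescoped closed form: substituting $\alpha_j^*=\rho_{1,j}\sigma_1/\sigma_j$ into \eqref{eq:MSE} indeed gives $\emf=\sigma_1^2\sum_{j=1}^k\lr{\rho_{1,j}^2-\rho_{1,j+1}^2}/m_j$, which is exactly the first display in the paper's proof, and the elimination of the weights is legitimate because for non-decreasing $\bb{m}$ the inner minimization over each $\alpha_j$ is exact and independent of $\bb{m}$ (this elimination matters, since the MSE is not jointly convex in $\lr{\bb{m},\bba}$). From there the paper does not argue via joint convexity; instead, for fixed $\bar{m}_1=\dots=\bar{m}_{i-1}=1$ it parameterizes the tail $\lr{m_{i+1},\dots,m_k}$ by the $L^{i+1}$-optimal values as a function of $\bar{m}_i$, obtains an explicit scalar value function $f\lr{\bar{m}_i}$, and computes $f''>0$ on the feasible range, concluding that $\bar m_i=1$ is optimal because the relaxed minimizer lies below $1$. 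Your argument replaces this calculus with a convex-analysis one: joint convexity of the reduced objective on the positive orthant, convexity of the (linearly cut) feasible set, and the segment from an arbitrary feasible point to the relaxed global minimizer (supplied by Lemma~\ref{thm:mod}), whose objective is convex and minimized at the far endpoint, hence non-increasing, so the crossing point with $m_i=1$ dominates. This buys a slightly stronger and cleaner statement \textemdash\ every feasible point with $m_i\geq 1$ is dominated by a feasible point with $m_i=1$, with no derivative computations and no need to restrict attention to tail-optimized configurations \textemdash\ while the paper's explicit value-function computation additionally delivers the scalar convexity fact it reuses afterwards to argue that, for components with $m_i^*>1$, the optimal integer is either $\lfloor m_i^*\rfloor$ or $\lceil m_i^*\rceil$ (the basis of the branch-and-bound remark). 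Your bridge to the integer program is at the same level of rigor as the paper's: both compare integer choices of $m_i$ through the continuously minimized value function, and $1$ being an integer closes the argument.
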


\begin{proof}
Choose  $1 = \bar{m}_1 = ... = \bar{m}_{i-1}$ and consider varying $\bar{m}_i$.  Let $\lr{\bb{m},\bba} \in\mathbb{R}^{k-i}\times\mathbb{R}^{k-i-1}$ correspond to the unique global minimum of the Lagrangian $L^{i+1}$ from \eqref{eq:ifunc} for each $\bar{m}_i$,
\begin{equation*}
    m_{i+1} = \frac{p-\sum_{j=1}^{i}\bar{m}_jw_j}{\sum_{j=i+1}^k w_jr_j}, \qquad m_j = m_{i+1} r_j, \qquad \alpha_j = \frac{\rho_{1,j}\sigma_1}{\sigma_j}, \qquad i+1<j\leq k,
\end{equation*}
where $r_j$ is defined as in Lemma~\ref{thm:mod}.  Let $\tilde{\bb{m}}=\lr{\bar{m}_1,...,\bar{m}_i,m_{i+1},...,m_k}$ and consider the MSE at $\lr{\tilde{\bb{m}},\bba}$,
\begin{align*}
    \frac{\emf}{\sigma_1^2} &=  \frac{1}{\tilde{m}_1}-\sum_{j=2}^k\lr{\frac{1}{\tilde{m}_{j-1}}-\frac{1}{\tilde{m}_j}}\rho_{1,j}^2 \\ 
    &= \sum_{j=1}^{i-1}\frac{\rho_{1,j}^2-\rho_{1,j+1}^2}{\bar{m}_j}+\frac{\rho_{1,i}^2-\rho_{1,i+1}^2}{\bar{m}_{i}}+\frac{1}{m_{i+1}}\sum_{j=i+1}^k\frac{\rho_{1,j}^2-\rho_{1,j+1}^2}{r_j} =: f\lr{\bar{m}_i},
\end{align*}
where the first equality uses the definition of $\bba$ while the second equality follows by re-indexing and the fact that $m_j = m_{i+1}r_j$ for all $j\geq i+1$.  We will show that $f$ is a convex function of $\bar{m}_i$.  Notice that for $j\geq i+1$,
\[\frac{1}{r_j} = \sqrt{\frac{w_j\lr{\rho_{1,i+1}^2-\rho_{1,i+2}^2}}{w_{i+1}\lr{\rho_{1,j}^2-\rho_{1,j+1}^2}}} = \frac{w_j\lr{\rho_{1,i+1}^2-\rho_{1,i+2}^2}}{w_{i+1}\lr{\rho_{1,j}^2-\rho_{1,j+1}^2}}\sqrt{\frac{w_{i+1}\lr{\rho_{1,j}^2-\rho_{1,j+1}^2}}{w_j\lr{\rho_{1,i+1}^2-\rho_{1,i+2}^2}}}, \]
where the term under the square root on the right-hand side is precisely $r_j$.  It follows that 
\[ \sum_{j=i+1}^k\frac{\rho_{1,j}^2-\rho_{1,j+1}^2}{r_j} = \frac{\rho_{1,i+1}^2-\rho_{1,i+2}^2}{w_{i+1}}\sum_{j=i+1}^k w_jr_j,\]
and therefore $f$ has the form
\[ f\lr{\bar{m}_i} = C\lr{\bar{m}_1,...,\bar{m}_{i-1}} + \frac{\rho_{1,i}^2-\rho_{1,i+1}^2}{\bar{m}_{i}}+\frac{\rho_{1,i+1}^2-\rho_{1,i+2}^2}{w_{i+1}}\lr{\frac{\lr{\sum_{j=i+1}^k w_jr_j}^2}{p-\sum_{j=1}^i\bar{m}_jw_j}}, \]
where $C$ is constant in $\bar{m}_i$ and the definition of $m_{i+1}$ was used.  Now, differentiation in $\bar{m}_i$ yields 
\begin{align*}
   f'\lr{\bar{m}_i} &= \frac{w_i\lr{\rho_{1,i+1}^2-\rho_{1,i+2}^2}}{w_{i+1}}\lr{\frac{\sum_{j=i+1}^k w_jr_j}{p-\sum_{j=1}^i\bar{m}_jw_j}}^2 - \frac{\rho_{1,i}^2-\rho_{1,i+1}^2}{\bar{m}_{i}^2}, \\
    f''\lr{\bar{m}_i} &= \frac{2w_i^2\lr{\rho_{1,i+1}^2-\rho_{1,i+2}^2}}{w_{i+1}}\frac{\lr{\sum_{j=i+1}^k w_jr_j}^2}{\lr{p-\sum_{j=1}^i\bar{m}_jw_j}^3} + \frac{2\lr{\rho_{1,i}^2-\rho_{1,i+1}^2}}{\bar{m}_i^3} \\
    &\coloneqq C_1\lr{p-\sum_{j=1}^i\bar{m}_jw_j}^{-3} + C_2\bar{m}_i^{-3},
\end{align*}
where $C_1,C_2$ are constants independent of $\tilde{\bb{m}}$.  Clearly $f'' > 0$ whenever $\sum_{j\leq i}\bar{m}_jw_j<p$ (i.e. whenever the choice of $\bar{m}_i$ is feasible for the budget), so that the MSE $\emf$ is convex in $\bar{m}_i$.  Therefore, $\bar{m}_i=1$ must be the optimal integer choice whenever $\bar{m}_i=m_i<1$ is defined by the global minimizer of $L^i$.
\end{proof}

Theorem~\ref{thm:optim} provides a guarantee that the modification in Algorithm~\ref{alg:mod} is optimal for the original MIP \eqref{eq:MFMCmin}, and leads to the immediate conclusion that the modified MFMC Algorithm~\ref{alg:mod} is at least as optimal for \eqref{eq:MFMCmin} as the original algorithm from \cite{peherstorfer2016}.  Indeed, the above shows that $m^*_1 \gets 1$ is the integer value leading to the smallest $\emf$ whenever $m_1^*<1$ solves the relaxation of \eqref{eq:MFMCmin}.  Similarly, if $m_1^*=...=m_{i-1}^*=1$ and $m_i^*<1$ minimizes $L^i$, then $m_i^*\gets 1$ leads to the smallest $\emf$ and satisfies the constraints of the MIP \eqref{eq:MFMCmin}.  Repeating this sequentially until there are no more $m^*_i<1$ immediately yields the following Corollary.

\begin{corollary}\label{cor:optim}
The modification to MFMC in Algorithm~\ref{alg:mod} is optimal for \eqref{eq:MFMCmin} up to the rounding $\bb{m}^*\gets\lfloor\bb{m}^*\rfloor$ inherent in the original procedure.
\end{corollary}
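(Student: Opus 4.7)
The plan is a finite induction derived directly from the two clauses of Theorem~\ref{thm:optim}, exploiting the fact that the while loop in Algorithm~\ref{alg:mod} mirrors exactly the recursive setup of that theorem. Because Theorem~\ref{thm:optim} already performs the hard analytic work (namely, the convexity argument in $\bar{m}_i$ that identifies $\bar{m}_i \gets 1$ as the integer optimum whenever the continuous minimizer of $L^i$ satisfies $m_i^*<1$), the corollary should fall out from iterating this single-step guarantee.

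First I would handle the base case. Solve the continuous relaxation of \eqref{eq:MFMCmin} via Lemma~\ref{thm:mod} with $i=1$. If the resulting $m_1^*\geq 1$, the while loop is never entered and Algorithm~\ref{alg:mod} collapses to the original MFMC procedure of \cite{peherstorfer2016}, whose only deviation from true optimality is the inherited $\lfloor\cdot\rfloor$ rounding, so the conclusion holds vacuously. If instead $m_1^*<1$, the first clause of Theorem~\ref{thm:optim} certifies that $m_1^*\gets 1$ is the optimal integer choice for the first component of \eqref{eq:MFMCmin}, and Lemma~\ref{thm:mod} with $i=2$ then supplies the continuously-optimal completion on the remaining components.

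Next I would iterate. At step $\ell$, having fixed $m_1^*=\cdots=m_{\ell-1}^*=1$ by previous iterations and computed the global minimizer of $L^\ell$ via Lemma~\ref{thm:mod}, exactly one of two things happens: either every free component of this minimizer is $\geq 1$, in which case the loop terminates and the only remaining discrepancy is the component-wise $\lfloor\cdot\rfloor$ rounding on the tail; or $m_\ell^*<1$, in which case the second clause of Theorem~\ref{thm:optim} certifies that $m_\ell^*\gets 1$ is optimal for \eqref{eq:MFMCmin} subject to the previous freezings. Since at most $k-1$ components can be frozen, the process terminates after finitely many steps, and the returned $\bb{m}^*$ differs from a true minimizer of \eqref{eq:MFMCmin} only by the unavoidable integer rounding on whatever tail was not frozen to unity.

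The main obstacle, and it is more bookkeeping than mathematical, is checking that each reduced problem encountered inside the loop genuinely satisfies the hypotheses of Lemma~\ref{thm:mod} and Theorem~\ref{thm:optim}. Specifically, I would need to verify that the ordering and cost-ratio conditions on $\{\rho_{1,i}\}$ and $\{w_i\}$ transfer to each subproblem (which they do because those hypotheses are stated pointwise in the index), and that the strict positivity and monotonicity of the Lemma~\ref{thm:mod} minimizer together with the budget condition $p\geq \sum_i w_i$ assumed in Algorithm~\ref{alg:mod} keep every intermediate iterate feasible. Once these structural invariants are confirmed, the chained application of Theorem~\ref{thm:optim} yields exactly the optimality claim up to the final $\bb{m}^*\gets\lfloor\bb{m}^*\rfloor$ rounding.
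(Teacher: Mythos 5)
Your proposal is correct and matches the paper's own argument: the corollary is obtained exactly by iterating the two clauses of Theorem~\ref{thm:optim} (set each offending $m_i^*<1$ to $1$, re-solve via Lemma~\ref{thm:mod}, repeat until no component is below one), leaving only the $\lfloor\cdot\rfloor$ rounding of the remaining components, which is the rounding inherited from the original MFMC procedure. The extra bookkeeping you flag (hypotheses transferring to each subproblem) is consistent with, and no more than, what the paper implicitly assumes.
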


Corollary~\ref{cor:optim} is analogous to the similar guarantees in \cite{peherstorfer2016,peherstorfer2018} which establish optimality of the model evaluation vector $\bb{m}^*$ for the relaxation of \eqref{eq:MFMCmin}.  On the other hand, the above result still cannot guarantee that the coefficients $m_i^*>1$ which solve the relaxed problem are optimal for the MIP \eqref{eq:MFMCmin}, although it is possible to find a small set of candidates which can be manually checked for optimality.  Indeed, the proof above shows that the MSE  $\emf$ is convex in the sub-leading entry $m^*_i$ of the global minimizer of $L^{i+1}$, so that given the minimizer for $L^i$ at each step the optimal choice for $m_i^*$ is either $\lfloor m_i^*\rfloor$ or $\lceil m_i^*\rceil$.  Therefore, a branch-and-bound algorithm based on this fact is guaranteed to produce a global minimum for \eqref{eq:MFMCmin} in at most $2^k$ steps.  As the simpler Algorithm~\ref{alg:mod} is already accurate enough for the present purposes, an approach based on this idea is left for future work.

\section{Numerical Experiments}\label{sec:numerics}

The results of the previous Section are now tested via two benchmark experiments which simulate low-budget use cases for MC and MFMC estimation.  More precisely, given computational models $\{f^i\}_{i=1}^k$ selected by Algorithm~\ref{alg:select} the modified MFMC Algorithm~\ref{alg:mod} is used to estimate the expectation of scalar quantities which arise from small-scale benchmark simulations in mechanics.  The performance of this estimation is evaluated for a range of computational budgets through comparison with simple MC estimation and a na\"{i}ve (generally infeasible) version of the original MFMC \cite[Algorithm 2]{peherstorfer2016}, whereby the optimal sampling $\bb{m}^*\in\mathbb{R}_+^k$ is rounded according to the rule
\[ m_i^* \gets \begin{cases}\lfloor m_i^*\rfloor & m_i^*\geq 1, \\
\lceil m_i^*\rceil & m_i^*<1\end{cases}. \]
Although this usually produces a sampling which exceeds the budget and is therefore impractical, its inclusion  provides a useful link between the modifications of Section~\ref{sec:mfmcnew} and the original MFMC algorithm.



The performance of each estimator $\tilde{y}$ produced by either MC or MFMC is measured using the MSE (or variance) $e\lr{\tilde{y}}$ and the relative MSE (or relative variance) $\hat{e}\lr{\tilde{y}} \coloneqq e\lr{\tilde{y}}/\lr{y_{\mathrm{ref}}^2}$, which provide a measure of spread around the desired expectation.  Since evaluating these MSEs requires an ``exact'' value $y_{\mathrm{ref}} \approx \EE{f^1}$ for comparison, for each example an MC approximation to $\EE{f^1}$ is constructed using $M>0$ i.i.d. samples of $Z$ for some large $M$, 
\[y_{\mathrm{ref}} = \frac{1}{M}\sum_{m=1}^{M}f^1\lr{\bb{z}_m}.\]
Moreover, to further mitigate inherent dependence on stochasticity during these examples, the expectations and MSEs generated by MC and MFMC are averaged over $N>0$ independent runs of each method, so that the estimators and MSEs used in practice are computed as
\[\bar{y} = \frac{1}{N}\sum_{j=1}^Ny_j, \qquad e\lr{\bar{y}} = \frac{1}{N}\sum_{j=1}^N \left(y_j - y_{\mathrm{ref}}\right)^2, \]
where $y_j \approx \EE{f^1}$ denotes the approximator produced by the relevant method (MC or MFMC) at run $j$.  Empirical results show that this facilitates a comparison which is less sensitive to random sampling than if each method is carried out only once (c.f. Figure~\ref{fig:toy}, bottom row).  Finally, note that all experiments in this Section are carried out in Python 3.9 on a 2022 Macbook Pro with 32GB of RAM using a NumPy \cite{harris2020} implementation of Algorithms~\ref{alg:select} and \ref{alg:mod}.

\subsection{Analytical model of a short column}
The first example considered here is taken from \cite{peherstorfer2016} and involves an analytic model of a short column with rectangular cross-sectional area subject to bending and axial force.  The high-fidelity model is
\[ f^1(\bb{z}) = 1 - \frac{4z_4}{z_1z_2^2z_3} - \left(\frac{z_5}{z_1z_2z_3}\right)^2,\]
where $\bb{z} \in [5,15]\times[15,25]\times\mathbb{R}^+\times\mathbb{R}^2 \subset \mathbb{R}^5$.  The random variable $Z$ is chosen such that the width and depth $z_1,z_2$ are distributed uniformly, the yield stress $z_3$ is distributed log-normally with mean 5 and standard deviation 0.5, the bending moment $z_4$ is distributed normally with mean 2000 and standard deviation 400, and the axial force $z_5$ is distributed normally with mean 500 and standard deviation 100.  Since the computational cost of $f^1$ is arbitrarily, the low-fidelity surrogates and computational costs are chosen semi-arbitrarily. In particular, the surrogates are taken to be
\begin{align*}
    f^2(\bb{z}) &= 1 - \frac{3.8z_4}{z_1z_2^2z_3} - \left(\frac{z_5\left(1 + \frac{z_4-2000}{4000}\right)}{z_1z_2z_3}\right)^2, \\
    f^3(\bb{z}) &= 1 - \frac{z_4}{z_1z_2^2z_3} - \left(\frac{z_5(1+z_4)}{z_2z_3}\right)^2, \\
    f^4(\bb{z}) &= 1 - \frac{z_4}{z_1z_2^2z_3} - \left(\frac{z_5(1+z_4)}{z_1z_2z_3}\right)^2, \\
    f^5(\bb{z}) &= 1 - \frac{z_4}{z_1z_2^2z_3} - \left(\frac{z_5}{z_1z_2z_3}\right)^2,
\end{align*}
where we note that $f^3$ and $f^5$ are reversed from their definitions in \cite{peherstorfer2016} (and this will not affect the results as the surrogates are always reordered by correlation).  Denoting $\boldsymbol{\rho}_1 = \{\rho_{1,i}\}_{i=1}^k$, the weights and model correlation coefficients are 
\begin{align*}
    \bb{w} &= \begin{pmatrix} 100 & 50 & 20 & 10 & 5 \end{pmatrix}^\intercal,\\
    \boldsymbol{\rho}_1 &= \begin{pmatrix} 1.0000000 & 0.99994645 & 0.6980721 & 0.92928154 & 0.99863737 \end{pmatrix}^\intercal,
\end{align*}
where 1000 i.i.d. samples of $Z$ are used to compute the displayed correlations.  With this data, the model selection Algorithm~\ref{alg:select} produces the subset $\mathcal{M}^*=\{f^1,f^2,f^5\}$ to be used in constructing the MFMC estimator $\hat{y}\approx\EE{f^1}$.  Here, the reference value $y_\mathrm{ref}$ is computed from $M=10^7$ i.i.d. samples of $Z$ and the estimators/MSEs are averaged over $N=1000$ independent runs.  For comparison, a visual of the $N=1$ run case is provided in the bottom row of Figure~\ref{fig:toy}.

\begin{table}[!ht]
\caption{Results of the short column experiment for various computational budgets $p$.  All errors are reported in units of $10^{-6}$.}\small
\smallskip
\centering
\begin{tabular}{ccccccccc}\toprule
\multicolumn{3}{c}{MC} & \multicolumn{6}{c}{Modified MFMC (Rounded MFMC)}
\\\cmidrule(lr){1-3}\cmidrule(lr){4-9}
           $p/w_1$ & $\remc$ & $\emc$ & $p/w_1$ & \# $f^1$ & \# $f^2$ & \# $f^5$ & $\remf$ & $\emf$ \\\midrule
2 & 25.94 & 25.29 & 2 (3.15) & 1 (1) & 1 (1) & 10 (33) & 5.020 (1.496) & 4.893 (1.458) \\
4 & 11.47 & 11.18 & 4 (4.8) & 1 (1) & 1 (1) & 50 (66) & 1.082 (0.8865) & 1.055 (0.8641) \\
8 & 5.860 & 5.712 & 7.5 (8.6) & 1 (1) & 1 (2) & 120 (132) & 0.4986 (0.4329) & 0.4860 (0.4219) \\
16 & 2.911 & 2.837 & 15.9 (16.2) & 1 (1) & 4 (4) & 258 (264) & 0.2434 (0.2159) & 0.2372 (0.2104) \\
32 & 1.463 & 1.426 & 31.45 (31.45) & 1 (1) & 8 (8) & 529 (529) & 0.1059 (0.1167) & 0.1032 (0.1137) \\
64 & 0.7411 & 0.7223 & 63.45 (63.45) & 2 (2) & 17 (17) & 1059 (1059) & 0.05698 (0.05485) & 0.05554 (0.05346) \\\bottomrule
\end{tabular}
\label{tab:toy}
\end{table}


Table~\ref{tab:toy} gives a numerical summary of the comparison between MC and both variants of MFMC when estimating the expectation $y=\EE{f^1}$, showing explicitly that the sampling Algorithms~\ref{alg:mod} and \cite[Algorithm 2]{peherstorfer2016} are generally inequivalent but reduce to the same procedure for large enough computational budgets.  Figure~\ref{fig:toy} plots the values resp. relative MSEs of the estimators obtained from MC and each MFMC method, along with shading indicating the standard deviation resp. variance in these quantities over the $N=1000$ runs.  As expected, each MFMC procedure is sufficient for a variance reduction of about an order of magnitude relative to MC regardless of the available budget.  However, the na\"{i}ve rounding procedure necessary for a direct application of \cite[Algorithm 2]{peherstorfer2016} clearly exceeds the budget in most cases, rendering it unreliable as an estimation method when computational resources are limited.  Despite this technical difficulty which has been addressed in Algorithm~\ref{alg:mod}, it is clear that MFMC estimation has benefits over traditional MC when a variety of information sources e.g. surrogate models are available.  Since MFMC is able to utilize more than just the high-fidelity model $f^1$, much of the computational budget can be loaded onto the cheaper surrogate models in order to decrease the MSE of the estimator $\hat{y}$.


\begin{figure}[!ht]
    \centering
    \begin{minipage}{0.33\textwidth}
        \includegraphics[width=\textwidth]{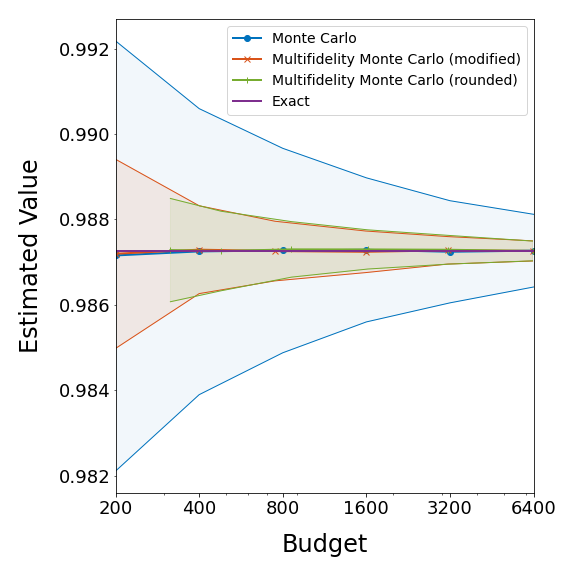}
    \end{minipage}%
    \begin{minipage}{0.33\textwidth}
        \includegraphics[width=\textwidth]{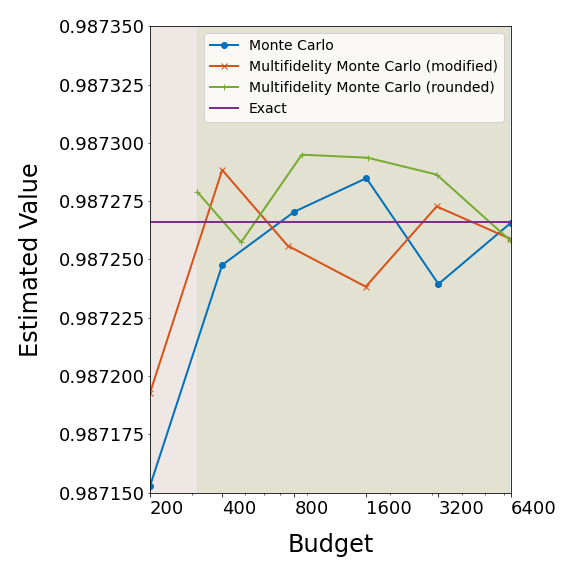}
    \end{minipage}%
    \begin{minipage}{0.33\textwidth}
        \centering
        \includegraphics[width=\textwidth]{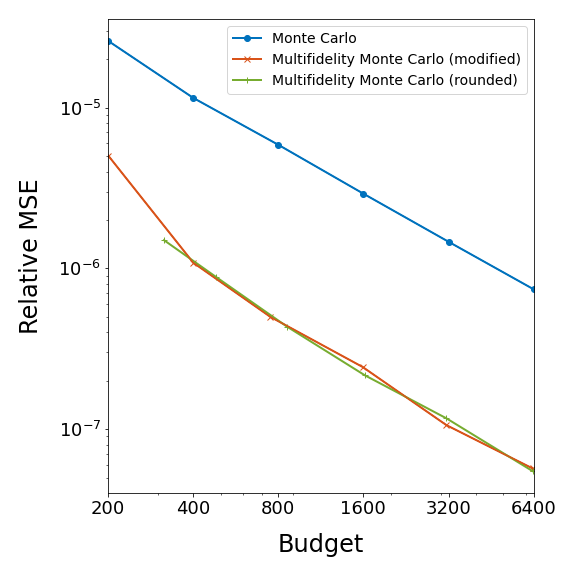}
    \end{minipage} \\
    \begin{minipage}{0.33\textwidth}
        \includegraphics[width=\textwidth]{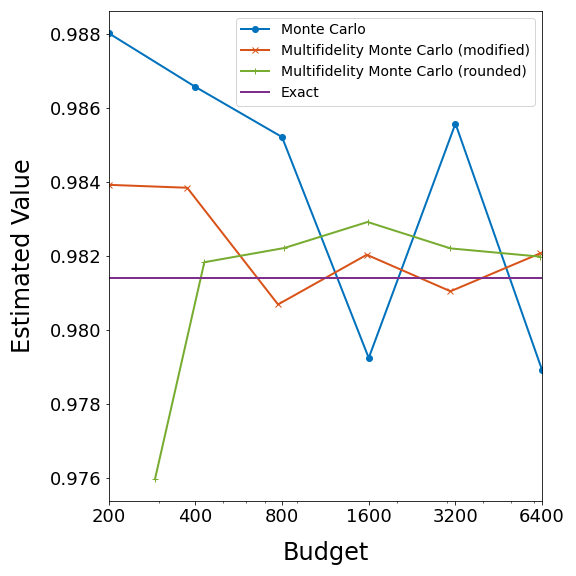}
    \end{minipage}%
    \begin{minipage}{0.33\textwidth}
        \includegraphics[width=\textwidth]{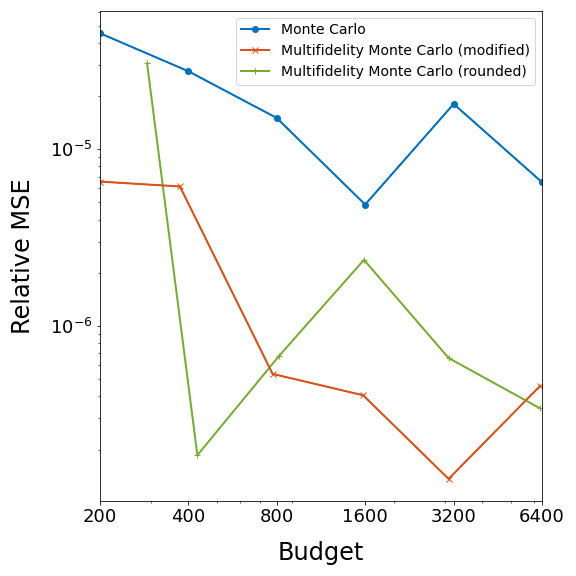}
    \end{minipage}
    \caption{Pictures corresponding to the short column experiment.  Plotted in the top row (left, middle) are the mean estimates and (right) MSEs over $N=1000$ independent runs of each method, along with shaded regions representing the standard deviations (left, middle) and variances (right, imperceptible) in these quantities over the $N$ runs. For comparison, the bottom row displays these same quantities for $N=1$ run of each method. }
    \label{fig:toy}
\end{figure}

\subsection{Inviscid Burgers equation}

The next experiment involves a parameterized version of the Inviscid Burgers equation seen in  \cite{gruber2022comparison}, which is a common model for systems which experience shocks.  Let $x\in[0,100]$, $t\in[0,10]$, and $\bb{z} = \begin{pmatrix}z_1 & z_2\end{pmatrix}^\intercal \in [0.5,3.5]\times [2\times 10^{-4},2\times 10^{-3}]$ be a vector of parameters.  Consider the initial value problem,
\begin{align}\label{eq:burgers}
\begin{split}
    w_t + \frac{1}{2}\left(w^2\right)_x &= 0.02 e^{z_2 x}, \\
    w(0, t, \bb{z}) &= z_1, \\
    w(x,0,\bb{z}) &= 1,
\end{split}
\end{align}
where subscripts denote partial differentiation and $w=w(x,t, \bb{z})$ represents the conserved quantity of the fluid.  Discretizing the spatial interval $[0,100]$ with finite differences converts \eqref{eq:burgers} to a system of $n$ ordinary differential equations defined at the nodal points $\{x^\alpha\}_{\alpha=1}^{n}$, which can be further discretized in time and solved for any $\bb{z}$ using a simple upwind forward Euler scheme.  More precisely, let $\Delta x = 100/256, \Delta t = 1/10$ be stepsizes and let $w^\alpha_k$ denote the solution $w$ at time step $k$ and spatial node $\alpha$.  Then, forward differencing applied to \eqref{eq:burgers} gives the algebraic system
\begin{equation*}
    \frac{w_{k+1}^\alpha - w_k^\alpha}{\Delta t} + \frac{1}{2}\frac{\left(w^2\right)_k^\alpha - \left(w^2\right)_k^{\alpha-1}}{\Delta x} = 0.02e^{z_2 x}, \qquad 1\leq\alpha\leq n
\end{equation*}
which expresses the solution $w^\alpha(t,\bb{z})\coloneqq w(x^\alpha,t,\bb{z})$ at time $t = (k+1)\Delta t$ in terms of its value at time $t = k\Delta t$.  Collecting this for each $1\leq \alpha\leq n$ yields the vectorized solution $\bb{w}(t,\bb{z}) \in \mathbb{R}^n$ which is stored in practice and can be used to define a quantity of interest for statistical estimation.  In particular, let $Z$ be a random variable chosen so that its realizations $(z_1,z_2)$ are distributed uniformly in their ranges of definition.  Then, $\bb{w} = \bb{w}(Z)$ becomes a random variable and it is meaningful to consider the high-fidelity model
\[ f^1\lr{\bb{w}(Z)} = \frac{1}{n}\sum_{\alpha=1}^n w^\alpha_{T}, \]
where the notation $w^\alpha_{T}$ indicates the value of the solution at node $\alpha$ and final time $T=10$.  This gives an average measure of $w$ which is useful to track as a function of the random variable $Z$ and whose expectation $y = \EE{f^1}$ can be estimated using MC and MFMC.  

To simulate a low-budget use case for MC/MFMC estimation in this context, access to the high-fidelity model $f^1$ is assumed along with an ensemble of reduced-order models (ROMs) based on proper orthogonal decomposition (POD) which will now be described.  Recall that POD uses snapshots of the high-fidelity solution at various time instances to generate a reduced basis of size $d \ll n$ whose span has minimal $\ell_2$ reconstruction error.  For this example, snapshot matrices $\bb{S} = \left(s^\alpha_j\right)$ with entries $s^\alpha_j = w^\alpha(t_j)$ are collected from simulations corresponding to 50 uniform i.i.d. samples of $Z$ and preprocessed by subtracting the relevant initial condition from their columns, generating matrices $\bar{\bb{S}} = \left(\bar{s}^i_j\right)$ with $\bar{s}^i_j = s^i_j - w^i(t_0)$.  These 50 matrices are then concatenated column-wise to form the full snapshot matrix $\bar{\bb{S}} = \bb{U}\bm{\Sigma}\bb{V}^\intercal$ (overloading notation), so that the optimal POD basis of dimension $d$ is contained in the matrix $\bb{U}_d$ containing the first $d$ columns of $\bb{U}$.  Denoting $\bb{w}_0 = \bb{w}(t_0)$ and $\hat{\bb{w}}\in\mathbb{R}^d$ a coefficient vector, this enables a POD approximation to the high-fidelity solution $\tilde{\bb{w}} = \bb{w}_0 + \bb{U}_d\hat{\bb{w}}, \approx \bb{w}$ which can be substituted into the discretization of \eqref{eq:burgers} to yield the reduced-order system
\begin{equation}\label{eq:burgersROM}
    \begin{split}
        \frac{\hat{\bb{w}}_{k+1}-\hat{\bb{w}}_k}{\Delta t} + \frac{1}{2}\left(\bb{a} + \bb{B}\hat{\bb{w}}_k + \bb{C}(\hat{\bb{w}}_k,\hat{\bb{w}}_k)\right) &=  0.02 \bb{U}_d^\intercal e^{z_2\bb{x}}, \\
        \hat{\bb{w}}_0 &= \bb{U}_d^\intercal(\bb{w}_0-\bb{w}_0) = \bm{0}.
    \end{split}
\end{equation}
where $0 \leq k \leq N_t-1$, $e^{z_2\bb{x}}$ acts component-wise, and $\bb{a},\bb{B},\bb{C}$ are vector-valued quantities which can be precomputed.  To describe this more explicitly, let $\bb{D}_x \in \mathbb{R}^{N\times N}$ denote the bidiagonal forward difference matrix with entries $\{0,1/\Delta x,...,1/\Delta x\}$ on its diagonal and $\{-1/\Delta x,..,-1/\Delta x\}$ on its subdiagonal. A straightforward computation then yields 
\begin{align*}
\bb{a} &= \bb{U}_d^\intercal\bb{D}_x(\bb{w}_0)^2, \\
\bb{B} &= 2\bb{U}_d^\intercal\bb{D}_x\mathrm{Diag}(\bb{w}_0)\bb{U}_d, \\
\bb{C} &= \bb{U}_d^\intercal\bb{D}_x \mathrm{diag}_{1,3}(\bb{U}_d \otimes \bb{U}_d),
\end{align*}
where $(\bb{w}_0)^2$ is a component-wise operation taking place in $\mathbb{R}^N$, $\mathrm{Diag}(\bb{w}_0)$ indicates the diagonal matrix built from $\bb{w}_0$, and $\mathrm{diag}_{1,3}(\bb{U}_d\otimes\bb{U}_d)$ indicates the diagonal of $\bb{U}_d\otimes\bb{U}_d$ that occurs when components 1 and 3 are the same, i.e. the rank 3 tensor with components $u^\alpha_j u^\alpha_k$.  Solving the $d$-dimensional system \eqref{eq:burgersROM} for $\hat{\bb{w}}$ gives an approximate solution $\tilde{\bb{w}}$ which is computationally less expensive and can be made arbitrarily accurate by increasing the  dimension $d$.  

\begin{figure}[!ht]
    \centering
    \begin{minipage}{0.33\textwidth}
        \includegraphics[width=\textwidth]{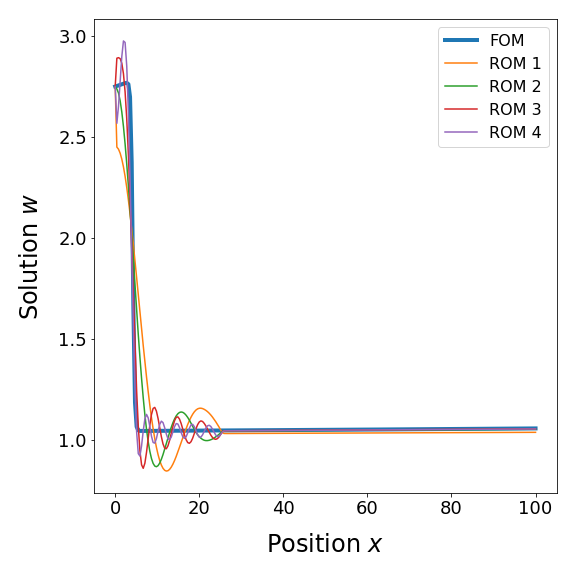}
    \end{minipage}%
    \begin{minipage}{0.33\textwidth}
        \includegraphics[width=\textwidth]{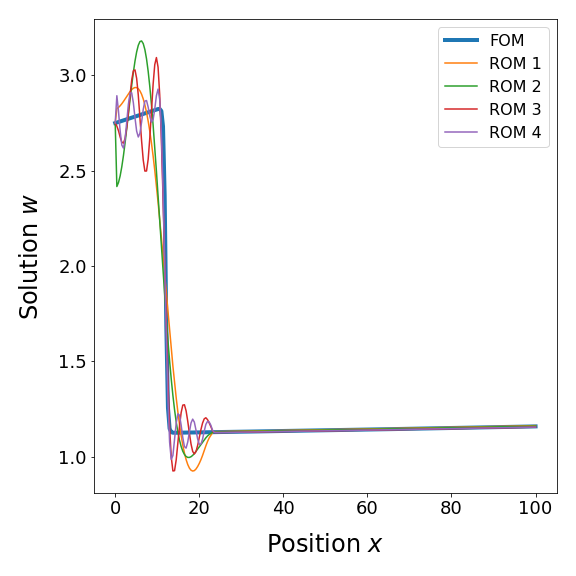}
    \end{minipage}%
    \begin{minipage}{0.33\textwidth}
        \centering
        \includegraphics[width=\textwidth]{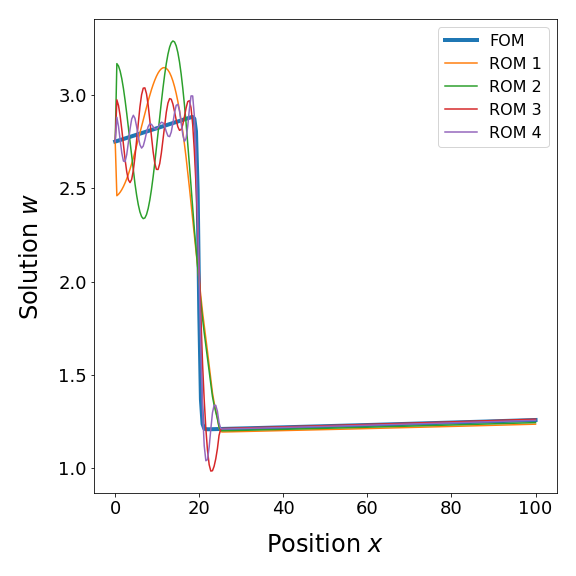}
    \end{minipage}
    \caption{Snapshots of the high-fidelity PDE model and its POD-ROM surrogates corresponding to the inviscid Burgers example at times $t = 2,6,10$ and parameter value $\bb{z} = \begin{pmatrix}2.75 & 0.0275\end{pmatrix}^\intercal$.}
    \label{fig:burgersFOM}
\end{figure}

In view of this, the ROM \eqref{eq:burgersROM} can be used to obtain multifidelity surrogates for $f^1(Z)$ by varying the approximation quality of $\tilde{\bb{w}}$.  The present experiment considers a sequence of ROMs defined by $d = \{3, 5, 10, 15\}$ and their associated surrogate models $\{f^2,f^3,f^4,f^5\}$ defined analogously to $f^1$.  It is evident from Figure~\ref{fig:burgersFOM} that approximation quality increases with $d$, although this benefit comes with an increased computational cost.  Using 100 i.i.d. samples of $Z$ to compute the computational costs and inter-model correlations for the model set $\{f^i\}_{i=1}^5$ yields the data
\begin{align*}
    \bb{w} &= 10^{-4}\begin{pmatrix} 30.5625 & 5.5174 & 5.8633 & 6.3854 & 7.4522 \end{pmatrix}^\intercal,\\
    \boldsymbol{\rho}_1 &= \begin{pmatrix} 1.0000000 & 0.99766585 & 0.98343683 & 0.99999507 & 0.99999882 \end{pmatrix}^\intercal,
\end{align*}
where the cost vector $\bb{w}$ is computed as the average time (in seconds) necessary to evaluate each model.  From this, the model selection Algorithm~\ref{alg:select} chooses the subset $\{f^1, f^4, f^2\}$ for use in constructing the MFMC estimator $\hat{y}$ which will approximate $y=\EE{f^1}$.  To evaluate the quality of MC and MFMC estimation, a reference MC approximation $y_\mathrm{ref}\approx \EE{f^1}$ is generated from $10^5$ i.i.d. samples of $Z$, and each estimator/MSE computed is averaged over $N=100$ independent runs of its relevant method.

\begin{table}[!ht]
\caption{Results of the Inviscid Burgers experiment for various computational budgets $p$.  All errors are reported in units of $10^{-3}$.}\small
\smallskip
\centering
\begin{tabular}{ccccccccc}\toprule
\multicolumn{3}{c}{MC} & \multicolumn{6}{c}{Modified MFMC (Rounded MFMC)}
\\\cmidrule(lr){1-3}\cmidrule(lr){4-9}
           $p/w_1$ & $\remc$ & $\emc$ &   $p/w_1$ & \# $f^1$ & \# $f^4$ & \# $f^2$ & $\remf$ & $\emf$ \\\midrule
2 & 17.35 & 36.96 & 1.93 (3.01) & 1 (1) & 1 (1) & 4 (10) & 8.077 (2.404) &  17.20 (5.120) \\
4 & 8.256 & 17.20 & 3.92 (4.82) & 1 (1) & 1 (1) & 15 (20) & 1.745 (1.493) & 3.716 (3.181) \\
8 & 3.428 & 7.302 & 7.92 (8.64) & 1 (1) & 2 (2) & 36 (40) & 1.145 (0.9163) & 2.440 (1.952) \\
16 & 1.852 & 3.945 & 15.7 (16.7) & 1 (1) & 4 (5) & 77 (81) & 0.3487 (0.2590) & 0.7427 (0.5517) \\
32 & 0.9291 & 1.979 & 31.8 (32.5) & 1 (1) & 10 (10) & 159 (163) & 0.1713 (0.1588) & 0.3649 (0.3383) \\
64 & 0.4641 & 0.9885 & 63.9 (64.2) & 1 (1) & 20 (20) & 325 (327) & 0.07745 (0.1039) & 0.1651 (0.2213) \\\bottomrule
\end{tabular}
\label{tab:burgers}
\end{table}

The results of this experiment are displayed in Table~\ref{tab:burgers} and Figure~\ref{fig:burgersROM}.  Again it is clear that simply rounding the solution of \cite[Algorithm 2]{peherstorfer2016} is not sufficient for producing a feasible MFMC estimator in the presence of limited computational resources, as the target budget is exceeded (sometimes greatly so) in every case.  This confirms that the modified MFMC Algorithm~\ref{alg:mod} is necessary for applying MFMC estimation in these cases, and gives confidence that Algorithm~\ref{alg:mod} will also be effective on large-scale problems where such resource restrictions are commonplace.  Moreover, it is also evident from the results presented here that Algorithm~\ref{alg:mod} preserves the accuracy benefits of MFMC estimation over simple MC, producing a notably lower MSE regardless of the size of computational budget.

\begin{figure}[!ht]
    \centering
    \begin{minipage}{0.33\textwidth}
        \includegraphics[width=\textwidth]{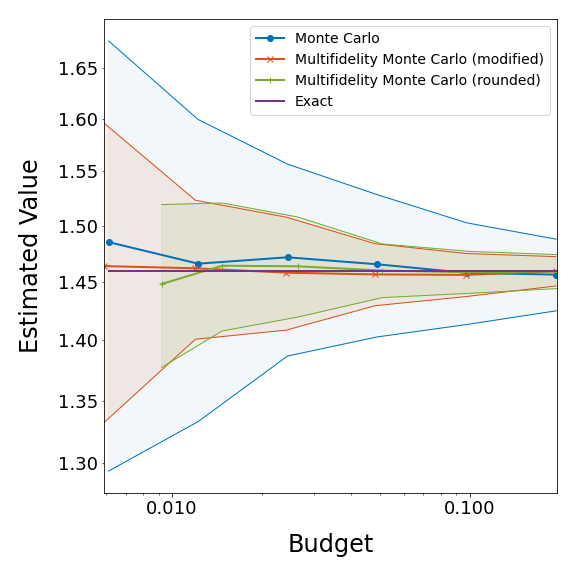}
    \end{minipage}%
    \begin{minipage}{0.33\textwidth}
        \includegraphics[width=\textwidth]{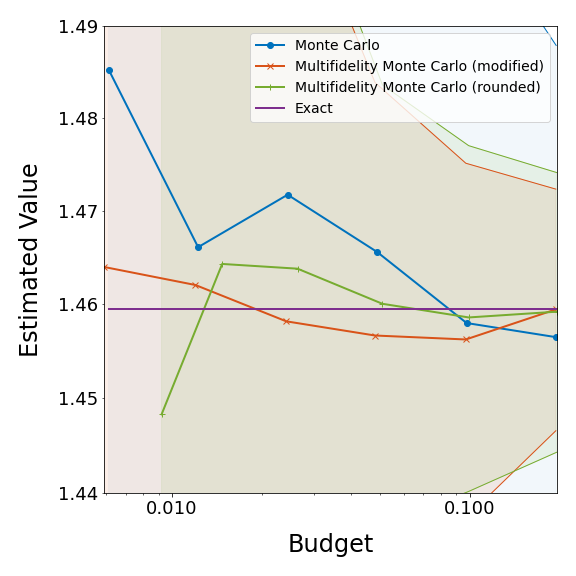}
    \end{minipage}%
    \begin{minipage}{0.33\textwidth}
        \centering
        \includegraphics[width=\textwidth]{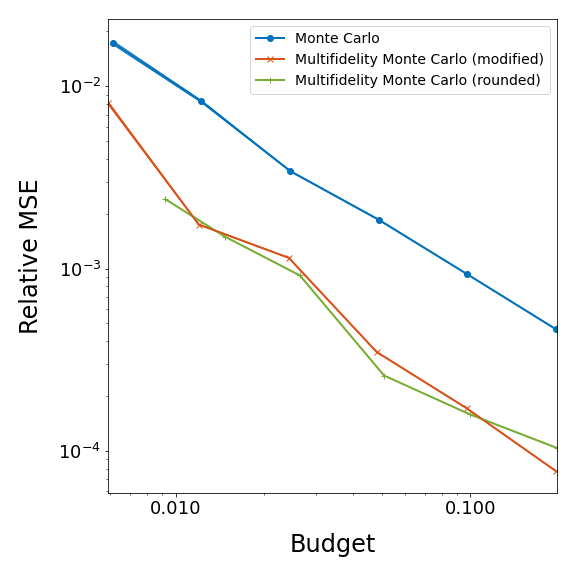}
    \end{minipage}
    \caption{Pictures corresponding to the inviscid Burgers experiment.  Plotted (left, middle) are the mean estimates and (right) MSEs over $N=100$ independent runs of each method, along with shaded regions representing the standard deviations (left, middle) and variances (right, almost imperceptible) in these quantities over the $N$ runs.}
    \label{fig:burgersROM}
\end{figure}

\section{Conclusion}
A method for the multifidelity Monte Carlo estimation of statistical quantities has been presented which is applicable to computational budgets of any size and reduces variance when compared to simple Monte Carlo estimation.  To accomplish this, existing MFMC technology from \cite{peherstorfer2016} has been adapted and modified, leading to the MFMC estimation Algorithm~\ref{alg:mod} which is fast to compute and simple to implement.  It has been shown through Theorem~\ref{thm:mod} and Corollary~\ref{cor:optim} that the proposed algorithm solves the sampling problem \eqref{eq:MFMCmin} at least as optimally as its namesake, and numerical experiments have been conducted which validate this fact. Forthcoming work in  \cite{gruber2022climate} will investigate applications of the present MFMC method to complex systems governed by partial differential equations, particularly in the context of climate modeling.  

\section*{Acknowledgements}
This work is partially supported by U.S. Department of Energy under grants DE-SC0020270, 
DE-SC0020418, and DE-SC0021077.

\bibliographystyle{ieeetr}
\bibliography{biblio.bib}

\begin{thebibliography}{10}

\bibitem{cubasch1994}
U.~Cubasch, B.~Santer, A.~Hellbach, G.~Hegerl, H.~H{\"o}ck, E.~Maier-Reimer,
  U.~Mikolajewicz, A.~St{\"o}ssel, and R.~Voss, ``{M}onte {C}arlo climate
  change forecasts with a global coupled ocean-atmosphere model,'' {\em Climate
  Dynamics}, vol.~10, no.~1, pp.~1--19, 1994.

\bibitem{gjerstad2003}
K.~I. Gjerstad, J.~J. Stamnes, B.~Hamre, J.~K. Lotsberg, B.~Yan, and
  K.~Stamnes, ``{M}onte {C}arlo and discrete-ordinate simulations of
  irradiances in the coupled atmosphere-ocean system,'' {\em Applied optics},
  vol.~42, no.~15, pp.~2609--2622, 2003.

\bibitem{leathers2004}
R.~A. Leathers, T.~V. Downes, C.~O. Davis, and C.~D. Mobley, ``{M}onte {C}arlo
  radiative transfer simulations for ocean optics: a practical guide,'' tech.
  rep., Naval Research Lab Washington Dc Applied Optics Branch, 2004.

\bibitem{hong2006}
Y.~Hong, K.-l. Hsu, H.~Moradkhani, and S.~Sorooshian, ``Uncertainty
  quantification of satellite precipitation estimation and {M}onte {C}arlo
  assessment of the error propagation into hydrologic response,'' {\em Water
  resources research}, vol.~42, no.~8, 2006.

\bibitem{tomassini2007}
L.~Tomassini, P.~Reichert, R.~Knutti, T.~F. Stocker, and M.~E. Borsuk, ``Robust
  {B}ayesian uncertainty analysis of climate system properties using {M}arkov
  chain {M}onte {C}arlo methods,'' {\em Journal of Climate}, vol.~20, no.~7,
  pp.~1239--1254, 2007.

\bibitem{mishra2012}
S.~Mishra, C.~Schwab, and J.~Sukys, ``Multilevel {M}onte {C}arlo finite volume
  methods for shallow water equations with uncertain topography in
  multi-dimensions,'' {\em SIAM Journal on Scientific Computing}, vol.~34,
  no.~6, pp.~B761--B784, 2012.

\bibitem{peherstorfer2016}
B.~Peherstorfer, K.~Willcox, and M.~Gunzburger, ``Optimal model management for
  multifidelity {M}onte {C}arlo estimation,'' {\em SIAM Journal on Scientific
  Computing}, vol.~38, no.~5, pp.~A3163--A3194, 2016.

\bibitem{patsialis2021}
D.~Patsialis and A.~A. Taflanidis, ``Multi-fidelity {M}onte {C}arlo for seismic
  risk assessment applications,'' {\em Structural Safety}, vol.~93, p.~102129,
  2021.

\bibitem{law2021}
F.~Law, A.~J. Cerfon, and B.~Peherstorfer, ``Accelerating the estimation of
  collisionless energetic particle confinement statistics in stellarators using
  multifidelity {M}onte {C}arlo,'' {\em Nuclear Fusion}, 2021.

\bibitem{khodabakhshi2021}
P.~Khodabakhshi, K.~E. Willcox, and M.~Gunzburger, ``A multifidelity method for
  a nonlocal diffusion model,'' {\em Applied Mathematics Letters}, vol.~121,
  p.~107361, 2021.

\bibitem{peherstorfer2018survey}
B.~Peherstorfer, K.~Willcox, and M.~Gunzburger, ``Survey of multifidelity
  methods in uncertainty propagation, inference, and optimization,'' {\em Siam
  Review}, vol.~60, no.~3, pp.~550--591, 2018.

\bibitem{leary2003}
S.~J. Leary, A.~Bhaskar, and A.~J. Keane, ``A knowledge-based approach to
  response surface modelling in multifidelity optimization,'' {\em Journal of
  Global Optimization}, vol.~26, no.~3, pp.~297--319, 2003.

\bibitem{forrester2007}
A.~I. Forrester, A.~S{\'o}bester, and A.~J. Keane, ``Multi-fidelity
  optimization via surrogate modelling,'' {\em Proceedings of the royal society
  a: mathematical, physical and engineering sciences}, vol.~463, no.~2088,
  pp.~3251--3269, 2007.

\bibitem{narayan2014}
A.~Narayan, C.~Gittelson, and D.~Xiu, ``A stochastic collocation algorithm with
  multifidelity models,'' {\em SIAM Journal on Scientific Computing}, vol.~36,
  no.~2, pp.~A495--A521, 2014.

\bibitem{ng2014}
L.~W.~T. Ng and K.~E. Willcox, ``Multifidelity approaches for optimization
  under uncertainty,'' {\em International Journal for numerical methods in
  Engineering}, vol.~100, no.~10, pp.~746--772, 2014.

\bibitem{pauli2015}
S.~Pauli and P.~Arbenz, ``Determining optimal multilevel {M}onte {C}arlo
  parameters with application to fault tolerance,'' {\em Computers \&
  Mathematics with Applications}, vol.~70, no.~11, pp.~2638--2651, 2015.
\newblock Numerical Methods for Scientific Computations and Advanced
  Applications.

\bibitem{peherstorfer2019transport}
B.~Peherstorfer and Y.~Marzouk, ``A transport-based multifidelity
  preconditioner for {M}arkov chain {M}onte {C}arlo,'' {\em Advances in
  Computational Mathematics}, vol.~45, no.~5, pp.~2321--2348, 2019.

\bibitem{konrad2022}
J.~Konrad, I.-G. Farcaş, B.~Peherstorfer, A.~{Di Siena}, F.~Jenko, T.~Neckel,
  and H.-J. Bungartz, ``Data-driven low-fidelity models for multi-fidelity
  {M}onte {C}arlo sampling in plasma micro-turbulence analysis,'' {\em Journal
  of Computational Physics}, vol.~451, p.~110898, 2022.

\bibitem{peherstorfer2018}
B.~Peherstorfer, M.~Gunzburger, and K.~Willcox, ``Convergence analysis of
  multifidelity {M}onte {C}arlo estimation,'' {\em Numerische Mathematik},
  vol.~139, no.~3, pp.~683--707, 2018.

\bibitem{harris2020}
C.~R. Harris, K.~J. Millman, S.~J. van~der Walt, R.~Gommers, P.~Virtanen,
  D.~Cournapeau, E.~Wieser, J.~Taylor, S.~Berg, N.~J. Smith, R.~Kern, M.~Picus,
  S.~Hoyer, M.~H. van Kerkwijk, M.~Brett, A.~Haldane, J.~F. del R{\'{i}}o,
  M.~Wiebe, P.~Peterson, P.~G{\'{e}}rard-Marchant, K.~Sheppard, T.~Reddy,
  W.~Weckesser, H.~Abbasi, C.~Gohlke, and T.~E. Oliphant, ``Array programming
  with {NumPy},'' {\em Nature}, vol.~585, pp.~357--362, Sept. 2020.

\bibitem{gruber2022comparison}
A.~Gruber, M.~Gunzburger, L.~Ju, and Z.~Wang, ``A comparison of neural network
  architectures for data-driven reduced-order modeling,'' {\em Computer Methods
  in Applied Mechanics and Engineering}, vol.~393, p.~114764, 2022.

\bibitem{gruber2022climate}
A.~Gruber, R.~Lan, M.~Gunzburger, L.~Ju, and Z.~Wang, ``Multifidelity {M}onte
  {C}arlo estimation for efficient uncertainty quantification in climate
  modeling,'' {\em to appear}, 2022.

\end{thebibliography}

\end{document}